\documentclass[a4paper,letterpaper,leqno]{amsart}
% -----------------------------------------------------------
\usepackage{amsmath, amsthm, amsfonts}
\usepackage{amssymb}
\usepackage{latexsym}
\usepackage{verbatim}
\usepackage{url}
\usepackage{fullpage}
\usepackage[all,cmtip]{xy}
\usepackage{color}
\usepackage[usenames,dvipsnames]{xcolor}
\usepackage{hyperref}
\hypersetup{colorlinks=true,urlcolor=MidnightBlue,citecolor=PineGreen,linkcolor=BrickRed}

\usepackage{enumerate}
\usepackage{amssymb}
\usepackage{array}
%\usepackage{nicefrac}
%\usepackage[section]{placeins}

% -----------------------------------------------------------
\DeclareMathAlphabet{\mathfr}{U}{euf}{m}{n}
% -----------------------------------------------------------
%% Theorem-like environments

\newtheorem{theorem}{Theorem}[section]
\newtheorem{conjecture}[theorem]{Conjecture}
\newtheorem{proposition}[theorem]{Proposition}

\newtheorem{lemma}[theorem]{Lemma}

\theoremstyle{definition}

\newtheorem{remark}[theorem]{Remark}

\numberwithin{equation}{section}

%% Special characters

\newcommand{\C}{\mathbb C}
\newcommand{\F}{\mathbb F}

\newcommand{\Q}{\mathbb Q}

\newcommand{\R}{\mathbb R}
\newcommand{\Z}{\mathbb Z}

\newcommand{\p}{\mathfrak p}

\newcommand{\triv}{\mathrm{triv}}
\newcommand{\ntriv}{\mathrm{ntriv}}

\newcommand{\USp}{\mathrm{USp}}
\newcommand{\Frob}{\mathrm{Frob}}

\newcommand{\Os}{\mathcal O}

\newcommand{\Sp}{\operatorname{Sp}}

\newcommand{\Tr}{\operatorname{Trace}}

\newcommand{\Aut}{\operatorname{Aut}}
\newcommand{\Li}{\operatorname{Li}}
\newcommand{\ST}{\operatorname{ST}}

\newcommand{\NN}{\operatorname{Nm}}

\newcommand{\Hom}{\operatorname{Hom}}
\newcommand{\GL}{\operatorname{GL}}

\newcommand{\Sym}{\operatorname{Sym}}

\newcommand{\Zar}{\operatorname{Zar}}
\newcommand{\AST}{\operatorname{AST}}

\begin{document}
\title{Frobenius sign separation for abelian varieties}

\author{Alina Bucur}
\address{Department of Mathematics \\ University of California, San Diego \\ 9500 Gilman Drive \#0112 \\ 
La Jolla \\ CA 92093 \\ USA}
\email{alina@math.ucsd.edu}
\urladdr{https://www.math.ucsd.edu/~alina/}

\author{Francesc Fit\'e}
\address{Departament de matem\`atiques i inform\`atica,
Universitat de Barce\-lona,
Gran via de les Corts Catalanes 585, 08007 Barcelona, Catalonia, Spain}
\email{ffite@ub.edu}
\urladdr{http://www.ub.edu/nt/ffite/}

\author{Kiran S. Kedlaya}
\address{Department of Mathematics \\ University of California, San Diego \\ 9500 Gilman Drive \#0112 \\ 
La Jolla \\ CA 92093 \\ USA}
\email{kedlaya@ucsd.edu}
\urladdr{http://kskedlaya.org}

%\date{\today}

\subjclass[2020]{11G10, 11G05, 11R44.}

\begin{abstract}
Let $A$ and $A'$ be nonzero abelian varieties defined over a number field $k$ such that $\Hom(A,A')=0$. Under the Generalized Riemann hypothesis for motivic $L$-functions attached to $A$ and $A'$, we show that there exists a prime $\p$ of $k$ of good reduction for $A$ and $A'$ at which the Frobenius traces of $A$ and $A'$ are nonzero and differ by sign, and such that the norm $\NN(\p)$ is $O_{k,g,g'}(\log(2NN')^2)$, where $N$ and $N'$ respectively denote the absolute conductors of $A$ and $A'$. We also make the dependence of the big-$O$ constant on $k$ and the dimensions $g,g'$ of $A,A'$ explicit up to an effectively computable absolute constant.
Our method extends that of Chen, Park, and Swaminathan who considered the case in which $A$ and $A'$ are elliptic curves.
\end{abstract}

\maketitle

\section{Introduction}\label{section: introduction}

Let $A$ and $A'$ be nonzero abelian varieties defined over a number field $k$ of respective positive dimensions $g$ and $g'$, with absolute conductors $N$ and $N'$. If $A$ and $A'$ are elliptic curves not isogenous over $k$, an argument of Serre (see \cite[Cor. 23]{BK16b}) provides a prime $\p$ of $k$ with norm $\NN(\p)=O_k(\log(2NN')^2)$ of good reduction for both $A$ and $A'$ at which the respective Frobenius traces $a_\p(A)$ and $a_\p(A')$ are distinct. In this note we investigate the more refined question of the existence of one such prime $\p$ for which $a_\p(A)\cdot a_\p(A')<0$. 

Under certain restrictions on the geometry of $A$ and $A'$, the implications for this problem of the effective Sato--Tate conjecture were considered in \cite[Cor. 1.3]{BFK23} (extending \cite[Thm. 4.3]{BK16b} in the case of nonisogenous elliptic curves without complex multiplication); see below for a closer discussion of these results. 

After \cite{BK16b}, the Frobenius sign separation problem for nonisogenous elliptic curves was reexamined by Chen, Park, and Swaminathan \cite[Thm. 1.3]{CPS18} who improved on the upper bound for a sign separating prime. Their method does not formally assume the effective Sato--Tate conjecture, but it is close in spirit: it is conditional on the generalized Riemann hypothesis for certain symmetric power $L$-functions. 

Let $\p$ be a prime of good reduction for $A$ and $A'$. Conveniently adapted to the higher dimensional setting, one key idea in \cite{CPS18} consists in rewriting the condition $a_\p(A)\cdot a_\p(A')<0$ as
$$
\psi_\p:= a_\p(A) \cdot a_\p(A')\cdot  (-2g + a_\p(A)) \cdot (2g' + a_\p(A'))>0.
$$
In the case of nonisogenous elliptic curves, Chen, Park, and Swaminathan determine the asymptotic \emph{strictly positive} value of the (conveniently weighted) sum of the $\psi_\p$, as well as the error term.
For the latter, they crucially use a kernel introduced by Bach \cite{Bac90} which allows them to optimize the error term not asymptotically, but where it is first dominated by the main term; this ultimately yields a savings of a log-log factor over \cite{BK16b}.

Let $\ST(A)$ and $\ST(A')$ be the Sato-Tate groups of $A$ and $A'$, together with their tautological representations $V$ and $V'$. Let $\ST(A\times A')$ denote the Sato--Tate group of the product $A\times A'$. It is the consideration of these Sato--Tate groups that forces us to assume that the Mumford--Tate conjecture holds for $A$ and $A'$ (see \S\ref{section: conjectures}), which we will do from now on. Our starting point is the reinterpretation of the quantities $\psi_\p$ in terms of Frobenius traces of the virtual representation 
\begin{equation}\label{equation: virtualrep}
\ST(A\times A') \rightarrow \GL((V^{\oplus -2g} \oplus V\otimes V)\otimes (V'^{\oplus 2g'} \oplus V'\otimes V')).
\end{equation} 
We show that if $\Hom(A,A')=0$, then the multiplicity of the trivial representation in the above virtual representation is strictly positive (see Lemma \ref{lemma: trivmult}), which provides a conceptual explanation of the positivity of the asymptotic value in \cite{CPS18}.

We will denote by $L(s,\chi)$ the (normalized) $L$-function attached to an irreducible character $\chi$ of $\ST(A\times A')$. 
The following theorem is the main contribution of this note.

\begin{theorem}\label{theorem: Main}
Let $A$ (resp. $A'$) be an abelian variety defined over $k$ of dimension $g$ (resp. $g'$), and with absolute conductor $N$ (resp. $N'$).
Suppose that the Mumford--Tate conjecture holds for $A$ and $A'$, and that the generalized  Riemann hypothesis holds for $L(s,\chi)$, where $\chi$ is an irreducible constituent of the representation \eqref{equation: virtualrep}. If $\Hom(A,A')=0$, then there exists a prime $\p$ not dividing $NN'$ with norm 
$$
\NN(\p)=O([k:\Q]^2\log(2|\Delta_k|)^2g^4(g')^4(g+g')^2\log(2NN')^2)
$$ 
such that $a_\p(A)$ and $a_\p(A')$ are nonzero and of opposite sign. The implied constant in the $O$-notation is an absolute constant (independent of $A$, $A'$, $k$, $g$, and $g'$). 
\end{theorem}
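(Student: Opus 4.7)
The plan is to prove the theorem by establishing positivity of a suitably weighted prime sum attached to the virtual representation \eqref{equation: virtualrep}, forcing at least one prime of norm $O(\log(2NN')^2)$ to contribute positively. Since the normalized Frobenius traces satisfy $a_\p(A)\le 2g$ and $a_\p(A')\ge -2g'$, the condition $\psi_\p>0$ forces both $2g-a_\p(A)$ and $2g'+a_\p(A')$ to be strictly positive and $a_\p(A)a_\p(A')<0$. Granting the Mumford--Tate conjecture, the trace of Frobenius at $\p$ on the virtual representation \eqref{equation: virtualrep} equals exactly $\psi_\p$. Hence it suffices to exhibit one prime $\p\nmid NN'$ of norm $O(\log(2NN')^2)$ at which this Frobenius trace is strictly positive.

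Decompose \eqref{equation: virtualrep} into $\ST(A\times A')$-irreducibles as $m_{\triv}\cdot\mathbf{1} \oplus \bigoplus_{\chi\ne\mathbf{1}} m_\chi\,\chi$; by Lemma \ref{lemma: trivmult}, $m_{\triv}>0$. For a non-negative weight $F_X$ supported in $[1,X]$, one has
\[
S(X):=\sum_{\p}\psi_\p\,F_X(\NN(\p)) = m_{\triv}\sum_{\p}F_X(\NN(\p)) + \sum_{\chi\ne\mathbf{1}}m_\chi\sum_{\p}\chi(\Frob_\p)\,F_X(\NN(\p)).
\]
For each nontrivial $\chi$ I would apply the explicit formula to $L(s,\chi)$ under GRH, bounding the zero-sum by $O(\sqrt{X}(\log(\mathfrak{q}(\chi)X))^A)$ for some $A=A(k,g,g')$; the main term is $\gg X/\log X$ by a standard prime count for $k$. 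Following \cite{CPS18}, I would take $F_X$ to be a variant of the Bach kernel of width $X$: this optimizes the error term precisely at the threshold where the main term first overtakes it, yielding the $\log(2NN')^2$ scale rather than the $\log^2(2NN')\log\log(2NN')$ scale coming from cruder weights. Setting $X=C\log(2NN')^2$ with $C=C(k,g,g')$ sufficiently large then forces $S(X)>0$, producing the required prime.

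The main obstacle is controlling uniformly, across the finitely many (independent of $N,N'$) irreducible constituents $\chi$ of \eqref{equation: virtualrep}, the analytic conductor $\mathfrak{q}(\chi)$ and the degree of $L(s,\chi)$: one needs $\log\mathfrak{q}(\chi)=O(\log(2NN'))$ with an implied constant depending only on $k,g,g'$. This should follow from standard conductor bounds for sub-representations of tensor powers of the motivic $L$-functions of $A$ and $A'$ (essentially $\mathfrak{q}(\chi)\le(NN')^{O(1)}$). Fitting these bounds together with the Bach kernel calculation---so that the $L$-function side and the Mellin transform of $F_X$ are optimized consistently---is the principal technical step, exactly paralleling the elliptic-curve case of \cite{CPS18}. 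A secondary bookkeeping issue is that the finitely many primes with $a_\p(A)=2g$ or $a_\p(A')=-2g'$ contribute zero to $\psi_\p$; by effective Chebotarev these cannot exhaust the primes up to $X$, so they do not affect the conclusion.
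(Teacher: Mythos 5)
Your proposal follows essentially the same route as the paper: reinterpret $\psi_\p$ as the Frobenius trace of the virtual representation~\eqref{equation: virtualrep}, invoke Lemma~\ref{lemma: trivmult} to get $\delta(\psi)>0$, and then apply a Bach-kernel weighted prime sum together with GRH-controlled error terms and the conductor bound $B_\chi=O(N^{d_\chi})$, concluding by taking $x=C\log(2NN')^2$. This is exactly the argument carried out in \S\ref{section: Bach}--\S\ref{section: linsignrevis} via Proposition~\ref{proposition: truncsum}.

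One quantitative point deserves care, because it is precisely where the $\log^2$ bound lives or dies. You state that ``the main term is $\gg X/\log X$ by a standard prime count,'' which reads as though your weight $F_X$ does not carry the von Mangoldt factor $\log\NN(\p)$. If the main term were genuinely only $\asymp X/\log X$ while the GRH error term is $\asymp \sqrt{X}\log(2NN')$, solving for the break-even threshold gives $X\asymp \log(2NN')^2\log\log(2NN')^2$, i.e.\ exactly the $\log\log$ loss that the Bach kernel is supposed to remove. In the paper's Proposition~\ref{proposition: truncsum}, the weight is $\log(\NN(\p))\,(\NN(\p)/x)^a\log(x/\NN(\p))$ — the $\log\NN(\p)$ arises naturally from the Dirichlet series of $-L'/L$ — and the resulting main term is $\tfrac{16}{25}\delta(\chi)x$, of order $x$, while the error is $O(d_\chi\log(2N)\log(2+w_\chi)\sqrt{x})$ with \emph{no} residual power of $\log x$. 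This combination is what makes $x=C\log(2NN')^2$ suffice. So in your write-up you should keep the von Mangoldt weight inside $F_X$ and make sure the exponent $A$ on $\log\mathfrak{q}(\chi)$ in the zero-sum estimate is $1$, not some unspecified constant; both are delivered by the Bach kernel and the paper's careful handling of the $\Gamma$-factors and higher prime powers (Lemma~\ref{lemma: auxdims}).

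A smaller remark: the ``secondary bookkeeping issue'' about primes with $\overline{a}_\p(A)=2g$ or $\overline{a}_\p(A')=-2g'$ is not needed. Once the weighted sum is strictly positive, some summand $\psi(y_\p)$ is strictly positive, and at such a prime all four factors in $\psi(y_\p)$ are automatically nonzero; no separate Chebotarev argument is required to exclude the degenerate primes.
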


The above result improves on the bound 
$$
\NN(\p)=O_{k,g,g'}\left(\log(2NN')^2\log(\log(4NN'))^6 \right)
$$
obtained in \cite[Cor. 1.3]{BFK23} in three different ways. First, it removes the $\log(\log(4NN'))^6$ factor. Second, it makes explicit the dependence of the implied constant on $k$, $g$, and $g'$. Third, it relaxes the hypotheses of \cite[Cor. 1.3]{BFK23} in several directions. To begin with, it replaces the condition $\ST(A\times A')\simeq \ST(A)\times \ST(A')$ with the weaker hypothesis that $\Hom(A,A')=0$, and it no longer requires the connectedness of $\ST(A)$ and $\ST(A')$. Furthermore, the assumption of the generalized Riemann hypothesis is limited to a finite number of irreducible representations of $\ST(A\times A')$, in contrast with its assumption for all irreducible representations of $\ST(A)\times \ST(A')$ made in \cite{BFK23}. 

In Section  \S\ref{section: conjectures}, we set notation around Sato--Tate groups and $L$-functions, and formulate the form of the generalized Riemann hypothesis on which our results depend (Conjecture~\ref{conjecture: GRH}).

In Section \S\ref{section: Bach}, we take the adaptation of Bach's argument
made in \cite{CPS18} for elliptic curves, and further adapt it to give a corresponding estimate for an arbitrary abelian variety (Proposition~\ref{proposition: truncsum}).

In Section \S\ref{section: linsignrevis}, we apply the adaptation of Bach's argument to prove Theorem~\ref{theorem: Main}.

\subsection*{Notation and terminology} 
Throughout this article, $k$ is a fixed number field and $g$ and $g'$ are fixed positive integers. 
By a prime of $k$, we refer to a nonzero prime ideal of the ring of integers of $k$.

For an ordered set $(X,\leq)$ and functions $f,h\colon X\rightarrow \R$ we write $f(x)=O(h(x))$ to indicate that there exist a real number $M>0$ and an element $x_0 \in X$ such that $|f(x)|\leq Mh(x)$ for every $x\geq x_0$. We will generally specify the element $x_0$ in the statements of theorems, but we will usually obviate it in their proofs, where it can be inferred from the context. We refer to $M$ as the \emph{implied constant} in the $O$-notation. In the results of this note, the implied constant is an \emph{effectively} computable absolute constant; we will not make this explicit here, but see \cite[(5.15)]{CPS18} for the case $g=g'=1$.

\subsection*{Acknowledgements} All three authors were supported by the Institute for Advanced Study during 2018--2019; this includes funding from National Science Foundation grant DMS-1638352. All three authors were additionally supported by the Simons Foundation grant 550033, and by the Hausdorff Research Institute for Mathematics funded by the Deutsche Forschungsgemeinschaft under Germany's Excellence Strategy--EXC-2047/1--390685813. Bucur was also supported by the Simons Foundation collaboration grant 524015, and by NSF grants DMS-2002716 and DMS-2012061. Fité was additionally supported by the Ram\'on y Cajal fellowship RYC-2019-027378-I, by the Mar\'ia de Maeztu program CEX2020-001084-M, by the AEI grant PID2022-137605NB-I00, and by the ERC grant 682152. Kedlaya was additionally supported by NSF grants DMS-1501214, DMS-1802161, DMS-2053473, DMS-2401536, by the UCSD Warschawski Professorship, and by a Simons Foundation fellowship during 2023--2024.

\section{Conjectural framework}\label{section: conjectures}

In this section, we present the conjectural framework on which the results of this article rely. Let $A$ denote an abelian variety of dimension~$g$ over $k$ of absolute conductor $N:=N_A$, that is, the absolute norm of the conductor ideal of $A$. For a rational prime $\ell$, we denote by 
$$
\varrho_{A,\ell}\colon G_k \rightarrow \Aut(V_\ell(A))
$$
the $\ell$-adic representation attached to $A$, obtained from the action of the absolute Galois group of $k$ on the rational $\ell$-adic Tate module $V_\ell(A):=T_\ell(A)\otimes \Q_\ell$. Let $\p$ be a prime  of $k$. If $\p$ does not divide~$N\ell$, write  $a_\p:=a_\p(A)$ for the trace of $\varrho_{A,\ell}(\Frob_\p)$, where $\Frob_\p$ is a Frobenius element at $\p$.

As explained in \cite[\S2.1]{BFK23}, under the assumption of the Mumford--Tate conjecture for $A$, one can construct an algebraic group $\AST(A)$ over $\Q$ with the property that for every prime $\ell$,
$$
\AST(A)\times _\Q \Q_\ell \simeq G_\ell^{1,\Zar}.
$$
Here $G_\ell^{\Zar}$ denotes the Zariski closure of the image of $\varrho_{A,\ell}$, and $G_\ell^{1,\Zar}$ denotes the intersection of  
$G_\ell^{\Zar}$ with $\Sp_{2g}/\Q_\ell$. The Sato--Tate group $\ST(A)$ is defined as a maximal compact subgroup of the group of $\C$-points of $\AST(A)$. We may identify $\ST(A)$ with its image under its tautological representation 
$$
\varrho\colon \ST(A)\rightarrow \GL(V),
$$
where $V$ is a $2g$-dimensional $\C$-vector space, which is unitary and symplectic. Hence, $\ST(A)$ can be regarded as a compact real Lie subgroup of $\USp(2g)$. As described in \cite[\S8.3.3]{Ser12}, if $\p$ does not divide $N\ell$, one can construct an element $y_\p$ in the set of conjugacy classes $Y$ of $\ST(A)$ with the property that
$$
\det(1-\varrho_{A,\ell}(\Frob_\p)\NN(\p)^{-1/2}T)=\det(1-\varrho(y_\p) T).
$$ 
In particular, if we denote by 
$$
\overline a_\p:=\frac{a_\p}{\sqrt{\NN(\p)}}
$$ 
the normalized Frobenius trace, we have that $\Tr(y_\p)=\overline a_\p$.
More in general, via Weyl's unitarian trick, any complex representation 
$$
\sigma\colon \ST(A) \rightarrow \GL(V_\chi)
$$ 
of character $\chi$ and degree $d_\chi$ gives rise to a representation
$$
\sigma_{A,\ell}\colon G_k\rightarrow \Aut(V_{\chi,\ell})
$$
where $V_{\chi,\ell}$ is a $\overline \Q_\ell$-vector space of dimension $d_\chi$. If $\p$ does not divide $N\ell$, one has
$$
\det(1-\sigma_{A,\ell}(\Frob_\p)\NN(\p)^{-w_\chi/2}T)=\det(1-\sigma(y_\p) T),
$$
where $w_\chi$ denotes the motivic weight of $\chi$. For a general $\p$, define
$$
L_\p(T, \chi):=\det(1-\sigma_{A,\ell}(\Frob_\p)\NN(\p)^{-w_\chi/2}T\,|\,V_{\chi,\ell}^{I_\p}),
$$
where $I_\p$ denotes the inertia subgroup of the decomposition group $G_\p$ at $\p$. The polynomials $L_\p(T, \chi)$ do not depend on $\ell$, and have degree $d_\chi(\p) \leq d_\chi$. Moreover, writing $\alpha_{\p,j}$ for $j=1,\dots,d_\chi(\p)$ to denote the reciprocal roots of $L_\p(T,\chi)$, we have that 
$$
|\alpha_{\p,j}|\leq 1.
$$
In fact, if $\p$ does not divide $N$, we have that $d_\chi(\p) = d_\chi$ and $|\alpha_{\p,j}|= 1$.
Therefore, the Euler product 
$$
L(s,\chi):=\prod_{\p}L_\p(\NN(\p)^{-s},\chi)^{-1}
$$
is absolutely convergent for $\Re(s)>1$. We will make strong assumptions on the analytic behavior of the above Euler product.

Following \cite[\S4.1]{Ser69}, define the positive integer 
$$
B_\chi:=|\Delta_k|^{d_\chi}\cdot N_\chi,
$$
where $\Delta_k$ is the absolute discriminant of $k$ and $N_\chi$ is the absolute conductor attached to the $\ell$-adic representation $\sigma_{A,\ell}$. For $j=1,\dots,d_\chi$, let $0\leq \kappa_{\chi,j}\leq 1+w_\chi/2$ be the local parameters at infinity (they are semi-integers that can be explicitly computed from the discussion in \cite[\S3]{Ser69}). Define the completed $L$-function
\begin{equation}\label{equation: LandGamma}
\Lambda(s,\chi):=B_\chi^{s/2}L(s,\chi)\Gamma(s,\chi),\qquad  \text{where}\quad\Gamma(s,\chi):=\pi^{d_\chi s/2}\prod_{j=1}^{d_\chi}\Gamma\left(\frac{s+\kappa_{\chi,j}}{2}\right).
\end{equation}
Let $\delta(\chi)$ be the multiplicity of the trivial representation in the character $\chi$ of $\ST(A)$.
\begin{conjecture}[Generalized Riemann hypothesis]\label{conjecture: GRH}
For every irreducible character $\chi$ of $\ST(A)$, the following holds:
\begin{enumerate}[i)]
\item The function $s^{\delta(\chi)}(s-1)^{\delta(\chi)}\Lambda(s,\chi)$ extends to an analytic function on $\C$ of order $1$ which does not vanish at $s=0,1$.
\item There exists $\epsilon \in \C^\times$ with $|\epsilon|=1$ such that for all $s\in \C$ we have
$$
\Lambda(s,\chi)=\epsilon\Lambda(1-s,\overline \chi),
$$
where $\overline \chi$ is the character of the contragredient representation of $\sigma$.
\item The zeros $\rho$ of $\Lambda(s,\chi)$ (equivalently, the zeros $\rho$ of $L(s,\chi)$ with $0<\Re(\rho)<1$) all have $\Re(\rho)=1/2$.
\end{enumerate}
\end{conjecture}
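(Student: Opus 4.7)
The statement splits into two parts of very different character: the analytic continuation and functional equation in items (i)--(ii), and the Riemann hypothesis for the nontrivial zeros in item (iii). The natural plan is to attack (i)--(ii) via Langlands reciprocity for the motives cut out by the compatible systems $\sigma_{A,\ell}$, while recognizing (iii) as the central open problem.

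For (i) and (ii), the plan is to exhibit $L(s,\chi)$ as the standard $L$-function of an isobaric automorphic representation $\pi(\chi)$ of $\GL_{d_\chi}$ over $k$. When $\chi$ is the tautological representation of $\ST(A)$, this is Langlands reciprocity for the Galois representation $\varrho_{A,\ell}$; for the general irreducible constituents $\chi$ of the virtual representation in \eqref{equation: virtualrep} one additionally needs Langlands functoriality for tensor products, symmetric and alternating squares, and restriction from $\ST(A)\times \ST(A')$ to $\ST(A\times A')$, applied to the automorphic representations attached to $A$ and $A'$. Granting automorphy, the Godement--Jacquet construction together with the analytic theory of Jacquet--Shalika furnishes the meromorphic continuation to $\C$, the functional equation $\Lambda(s,\chi)=\epsilon\,\Lambda(1-s,\overline{\chi})$ with $|\epsilon|=1$, the identification of the conductor as $B_\chi$ and of the archimedean factors as $\Gamma(s,\chi)$, the assertion that $\Lambda(s,\chi)$ has order $1$, and the pole/zero structure at $s=0,1$ controlled by $\delta(\chi)$ through the multiplicity of the trivial representation in $\pi(\chi)$. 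Together these yield (i) and (ii).

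Item (iii) is the main obstacle, and is in fact the Riemann hypothesis itself for a large class of motivic $L$-functions. Already in the simplest nontrivial cases --- for instance $g=g'=1$ with $\chi$ the character of $V\otimes V'$ --- the assertion specializes to GRH for Dirichlet $L$-functions and to GRH for $L$-functions of elliptic curves, both of which are completely open. None of the general frameworks one could imagine invoking --- a Hilbert--P\'olya style spectral interpretation, positivity of Weil's explicit formula, or a Deligne-type purity theorem in some geometric avatar --- has so far yielded progress in the arithmetic setting. For this reason the paper correctly formulates (iii) as a hypothesis rather than attempting a proof, and uses it only as analytic input to Theorem~\ref{theorem: Main}.
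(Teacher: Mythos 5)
You have correctly recognized that the statement in question is labeled a Conjecture in the paper, not a Theorem, and that the authors state it only as a hypothesis on which their main result is conditional: there is no proof in the paper to compare against, and none is currently known. Your discussion is an accurate account of the situation --- the meromorphic continuation, functional equation, and conductor/archimedean-factor identifications of items (i)--(ii) would, in the expected framework, follow from Langlands reciprocity and functoriality for the compatible systems $\sigma_{A,\ell}$ (which is itself open in this generality, hence why the paper formulates (i)--(ii) as part of the conjecture rather than citing them), while item (iii) is the Riemann hypothesis for these motivic $L$-functions and is entirely out of reach; the paper, following Serre \cite{Ser69}, simply assumes all three. In short, there is no gap in your response: you have accurately identified that the correct treatment of this statement is to assume it, not to prove it, and the paper does exactly that.
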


\section{Use of the Bach kernel}\label{section: Bach}

Let $\chi$ be an irreducible character of $\ST(A)$. In \cite{BFK23} we used the estimate
\begin{equation}\label{equation: Murtyestimate}
\sum_{\NN(\p)\leq x} \chi(y_\p)= \delta(\chi) \Li(x) + O(d_\chi \sqrt{x} \log(N(x+w_\chi)))\qquad\text{for }x\geq 2,
\end{equation}
where $\Li(x):=\int_2^\infty dt/\log(t)$, to derive an effective form of the Sato-Tate conjecture\footnote{
In \eqref{equation: Murtyestimate} and thereafter, we make the convention that  all sums involving the classes $y_\p$ run over primes $\p$ not dividing $N$ unless otherwise specified. A similar convention applies for sums involving the normalized Frobenius traces $\overline a_\p=\Tr(y_\p)$.
}. This estimate is obtained by computing a contour integral of the function 
$$
-\frac{L'}{ L}(s,\chi)\frac{x^s}{s}=\sum_{\p}\sum_{r\geq 1}\sum_{j=1}^{d_{\chi}(\p)}\alpha_{\p,j}^r\log(\NN(\p))\frac{(x/\NN(\p)^r)^s}{s}
$$
and by exploiting the fact that for a real number $c>1$ we have
\begin{equation}\label{equation: LOkernel}
\frac{1}{2\pi i}\int_{c-i\infty}^{c+i\infty}\frac{y^s}{s}ds=\begin{cases} 
0 & \text{if } 0 < y < 1,\\
1/2 & \text{if } y = 1,\\
1 & \text{if } y>1.
\end{cases}
\end{equation} 

Following Chen, Park, and Swaminathan \cite{CPS18} in the elliptic curve case, in this section we will obtain an analogue of \eqref{equation: Murtyestimate}
by replacing the use of the kernel in \eqref{equation: LOkernel} by that of
\begin{equation}\label{equation: Bkernel}
\frac{1}{2\pi i}\int_{2-i\infty}^{2+i\infty}\frac{y^s}{(s+a)^2}ds=\begin{cases}
0 & \text{if } 0<y<1,\\
y^{-a}\log (y) & \text{if } y\geq 1.
\end{cases}
\end{equation}
Here $a$ is any real number lying in $(0,1)$. This kernel was introduced by Bach (see \cite[Lem. 4.1]{Bac90}). 

\begin{remark}\label{remark: propvirtual}
It will be useful to derive a result that applies to a virtual character $\chi$ of $\ST(A)$, that is, an integral linear combination of irreducible characters of $\ST(A)$. We say that that $\chi$ is selfdual if $\chi=\overline\chi$. If $\chi=\sum_{i=1}^r n_i\chi_i$, where $n_i\in \Z$ and $\chi_i$ are irreducible characters of $\ST(A)$, the completed $L$-function attached to $\chi$ is defined as the product
$$
\Lambda(s,\chi):=\prod_{i=1}^r \Lambda(s,\chi_i)^{n_i}.
$$  
One can similarly define a conductor $B_\chi$ and a $\Gamma$-factor at infinity $\Gamma(s,\chi)$ for $\chi$.  By the weight $w_\chi$ of $\chi$ we mean the maximum of the motivic weights of the irreducible constituents of $\chi$. For a virtual character $\chi=\chi_+-\chi_{-}$, where $\chi_+$ and $\chi_-$ are honest characters without common irreducible constituents, define 
\begin{itemize}
\item $d_\chi=d_{\chi_+} + d_{\chi_-}$.
\item $\delta(\chi)=\delta(\chi_+)-\delta(\chi_-)$.
\item $t_\chi$ as the maximum of $\chi$ attained on $\ST(A)$ (which exists as $\ST(A)$ is compact).
\item $\gamma_\chi$ as the number of factors of the form $\Gamma((s+\kappa)/2)$, for $0\leq \kappa \leq w_\chi/2+1$,  that constitute $\Gamma(s,\chi)$ (multiplying or dividing)\footnote{In other words, $\gamma_\chi$ is the number of $\Gamma$-factors after eventual cancellation. See Lemma~\ref{lemma: auxdims} for an example of $\Gamma$-factor cancellation. This will later be exploited in Proposition~\ref{proposition: truncsum}.}.  
\item $n_\chi=\max\{t_\chi,\gamma_\chi\}$.
\end{itemize}
Note that $t_\chi=\gamma_\chi=n_\chi=d_\chi$ when $\chi$ is a honest character. In general, however, we only have $t_\chi,\gamma_\chi,n_\chi\leq d_\chi$. 
 
\end{remark}

Going back to the argument using Bach's kernel, we fix $a=1/4$ in \eqref{equation: Bkernel}. In fact, we could take any value $a\in (0,1/4]$, the particular choice of $a$ affecting only the implied constant in the $O$-notation in the subsequent results. In order to lighten the notation, for $r\geq 1$, let us set
$$
\Lambda_\chi(\p^r,x):=\left(\sum_{j=1}^{d_{\chi_{+}}(\p)}\alpha_{\p,j,+}^r -\sum_{j=1}^{d_{\chi_{-}}(\p)}\alpha_{\p,j,-}^r\right)\log(\NN(\p))\left(\frac{\NN(\p)^r}{x}\right)^a\log\left(\frac{x}{\NN(\p)^r}\right).
$$
Here, $\alpha_{\p,j,+}$ (resp. $\alpha_{\p,j,-}$) denote the reciprocal roots of $L_\p(T,\chi_+)$ (resp. $L_\p(T, \chi_-)$).
\begin{proposition}\label{proposition: truncsumweak}
Let $\chi$ be a virtual selfdual character of $\ST(A)$. Assuming Conjecture~\ref{conjecture: GRH} for each of the irreducible constituents of $\chi$, we have
$$
\sum_{\NN(\p)\leq x} \Lambda_\chi(\p,x)=\frac{16}{25}\delta(\chi)x+O((n_\chi+\log(B_\chi))\log(2+w_\chi)\sqrt x(\log x)^2)\qquad \text{for every $x\geq 2$}.
$$
\end{proposition}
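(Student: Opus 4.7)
The plan is to carry out a contour integral argument built on Bach's kernel identity \eqref{equation: Bkernel}, adapting the approach of \cite{CPS18} to the higher-dimensional setting.

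\textbf{Step 1 (setup).} For $a=1/4$, define
$$I(x) := \frac{1}{2\pi i}\int_{2-i\infty}^{2+i\infty}\left(-\frac{L'}{L}(s,\chi)\right)\frac{x^s}{(s+a)^2}\,ds.$$
For the virtual character $\chi=\chi_+-\chi_-$, write $-L'/L(s,\chi) = -L'/L(s,\chi_+) + L'/L(s,\chi_-)$, expand each logarithmic derivative into its Dirichlet series on $\Re(s)=2$, and apply \eqref{equation: Bkernel} termwise to obtain
$$I(x) = \sum_{\p}\sum_{r\geq 1,\,\NN(\p)^r\leq x}\Lambda_\chi(\p^r,x).$$
The use of reciprocal roots of $L_\p(-T,\chi_-)$ in the definition of $\alpha_{\p,j,-}$ (rather than those of $L_\p(T,\chi_-)$) is precisely what packages the sign from the $+L'/L(s,\chi_-)$ term correctly for both parities of $r$.

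\textbf{Step 2 (residue shift).} Shift the contour to $\Re(s)=-B$ for $B$ large and apply the residue theorem. The residues picked up come from: $s=1$, giving $\delta(\chi)\cdot x/(1+a)^2=\tfrac{16}{25}\delta(\chi)\,x$ (the main term); $s=0$, giving $O(\gamma_\chi+\delta(\chi))$ (from the zero or pole of $L$ there combined with $1/a^2$); the double pole of $1/(s+a)^2$ at $s=-a$, whose residue involves $\frac{L'}{L}(-a,\chi)$ and $\frac{d}{ds}\frac{L'}{L}(-a,\chi)$; the nontrivial zeros $\rho$ of $L(s,\chi)$; and the trivial zeros from poles of $\Gamma(s,\chi)$.

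\textbf{Step 3 (bounds via GRH).} Under Conjecture~\ref{conjecture: GRH} iii), every nontrivial zero satisfies $\Re(\rho)=1/2$, so
$$\left|\sum_\rho\frac{x^\rho}{(\rho+a)^2}\right|\leq\sqrt{x}\sum_\rho\frac{1}{|\rho+a|^2}.$$
The extra factor $(s+a)^{-1}$ makes the right-hand sum absolutely convergent, and a Jensen-formula argument applied to the entire order-one function $\Lambda(s,\chi)$ gives $\sum_\rho|\rho+a|^{-2}=O(\log B_\chi+\gamma_\chi\log(w_\chi+2))$. The residue at $s=-a$ is evaluated by applying the functional equation (Conjecture~\ref{conjecture: GRH} ii)) to move the argument to $s=1+a$, where the Dirichlet series of $L'/L(\cdot,\overline\chi)$ converges absolutely and is bounded by $O(t_\chi)$; the logarithmic derivative of $\Gamma(s,\chi)$ at $s=-a$ contributes $O(\gamma_\chi\log(w_\chi+2))$. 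Trivial zeros contribute $O(1)$.

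\textbf{Step 4 (truncation and prime powers).} Truncating the contour at height $T$ introduces an error of the form $O(x\log x/T)$ on the $\Re(s)=2$ side and a comparable error on the vertical segments of the shifted contour; balancing these against the zero-sum bound and choosing $T$ polynomial in $x$ introduces the factor $(\log x)^3$ into the final estimate. The tail over $r\geq 2$ is bounded, via $|\sum_j\alpha_{\p,j,\pm}^r|\leq t_\chi$ and the elementary count of prime powers of exponent $\geq 2$ up to $x$, by $O(t_\chi\sqrt{x}\log x)$, which is absorbed into the claimed error term. Assembling the contributions of Steps 2--4 yields the stated asymptotic.

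\textbf{Main obstacle.} The subtle point is securing the dependence on $n_\chi=\max(t_\chi,\gamma_\chi)$ rather than the naive $d_\chi=d_{\chi_+}+d_{\chi_-}$ one would obtain by treating $\chi_+$ and $\chi_-$ separately. The improvement from $d_\chi$ to $\gamma_\chi$ requires tracking the $\Gamma$-factor cancellations between $\Lambda(s,\chi_+)$ and $\Lambda(s,\chi_-)$ throughout the functional-equation manipulation at $s=-a$ and the zero-counting estimate, while the improvement to $t_\chi$ exploits the uniform bound $t_\chi=\max_{g\in\ST(A)}\chi(g)$ in both the absolutely convergent region $\Re(s)=1+a$ and the prime-power bookkeeping.
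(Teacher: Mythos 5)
Your overall architecture is correct: set up $I(x)$ with Bach's kernel, shift the contour, isolate the residue at $s=1$, bound the zeros via GRH, and handle the double pole at $s=-a$. Using the functional equation (Conjecture~\ref{conjecture: GRH}~ii) with $\chi=\overline\chi$) to transport $\frac{L'}{L}(-a,\chi)$ to $\frac{L'}{L}(1+a,\chi)$ is a legitimate alternative to the paper's route, which instead subtracts the Hadamard-type expansion \eqref{equation: logarithmicHadam} at $s=2$ from its value at $s=-a$; both yield the same bound $O((n_\chi+\log B_\chi)\log(2+w_\chi))$. Note, however, that you still need to bound the derivative $(L'/L)'(-a,\chi)$ (the double pole forces it), which your Step 3 mentions only in passing and does not actually estimate.

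The genuine error is in Step 4, where you misidentify the source of the $(\log x)^3$ factor. You claim it arises from truncating the contour at height $T$ and balancing $O(x\log x/T)$ against the zero sum. But the entire point of replacing the kernel $x^s/s$ by Bach's $x^s/(s+a)^2$ is that the latter is absolutely integrable along vertical lines: the integral over $\Re(s)=2$ converges without truncation, the horizontal segments of $\Gamma_{T,U}$ contribute $O(1/T)$ and vanish as $T\to\infty$, and the contour shift is clean. There is no truncation error to balance. The $(\log x)^3$ comes entirely from the crude bound on the prime-power tail $r\geq 2$ in \eqref{equation: roughbound}, which majorizes the weight by $\log x$, counts the $O(\log x)$ values of $r$, and uses the trivial estimate $\sum_{\NN(\p)\leq\sqrt x}\log\NN(\p)=O(\sqrt x\log x)$, giving $O(n_\chi\sqrt x(\log x)^3)$. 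Your own estimate of the tail as $O(t_\chi\sqrt x\log x)$ is both too optimistic and inconsistent with your attribution: if the tail were that small and no truncation were required, the proposition would hold with error $O(\cdots\sqrt x)$ rather than $O(\cdots\sqrt x(\log x)^3)$. This distinction is not cosmetic: it is precisely because the $(\log x)^3$ lives in the $r\geq 2$ tail that Proposition~\ref{proposition: truncsum} can later remove it by peeling off the $r=2$ term and applying the present proposition to $\Psi^2_\chi$.
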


\begin{proof}
By logarithmically differentiating, integrating, and applying \eqref{equation: Bkernel}, we obtain
\begin{equation}\label{equation: logdifint}
I_\chi=-\frac{1}{2\pi i}\int_{2-i\infty}^{2+i\infty} \frac{L'}{L}(s,\chi)\frac{x^s}{(s+a)^2}ds=\sum_{r\geq 1}\sum_{\NN(\p)^r\leq x}\Lambda_\chi(\p^r,x).
\end{equation}

Observe that
\begin{equation}\label{equation: roughbound}
\sum_{r\geq 2} \sum_{\NN(\p)^r\leq x} \Lambda_\chi(\p^r,x)\leq n_\chi \sum_{r\geq 2}\sum_{\NN(\p)\leq x^{1/r}}\log(\NN(\p)) \log x=O\left([k:\Q]n_\chi \sqrt x (\log x)^2\right).
\end{equation}
For the last equality we have used that, since there is no prime ideal $\p$ of $k$ such that $\NN(\p)^r\leq x$ when $r>\log_ 2(x)$, the sum indexed by $r$ in the above expression has at most $O(\log x)$ summands. We have also used the prime number theorem to bound the number of primes $\p$ with $\NN(\p)\leq x^{1/r}$ by $O([k:\Q]\sqrt x/\log(x))$. 
We thus have that
\begin{equation}\label{equation: intform}
\sum_{\NN(\p)\leq x} \Lambda_\chi(\p,x)=I_\chi+O([k:\Q]n_\chi\sqrt x(\log x)^2)
\end{equation}
By letting $U>0$ (resp. $T>0$) be a large real number such that $-U$ (resp. $\pm T$) does not coincide with any of the trivial zeros (resp. ordinates of the nontrivial zeros) of $L(s,\chi)$, one can show that
\begin{equation}\label{equation: contourint}
I_\chi=\lim_{T,U\rightarrow \infty}\frac{-1}{2\pi i}\int_{\Gamma_{T,U}}\frac{L'}{L}(s,\chi)\frac{x^s}{(s+a)^2}ds,
\end{equation}
where $\Gamma_{T,U}$ is the rectangular contour of vertices $2-iT$, $-U-iT$, $-U+iT$ and $2+iT$. By the residue theorem, we are left with estimating the absolute value of the residues of the integrand in \eqref{equation: contourint}. 

The pole at $s=1$ contributes a residue of
$$
\delta(\chi)\frac{x}{(1+a)^2}=\frac{16}{25}\delta(\chi)x.
$$
Let $Z_\chi^{\triv}$ (resp. $Z_\chi^{\ntriv}$) denote the collection of trivial (resp. nontrivial) zeros of $L(s,\chi)$ counted with multiplicities. The contribution of the residues at $Z_\chi^{\triv}$ satisfies 
$$
\left| \sum_{\rho\in Z_\chi^\triv}\frac{x^\rho}{(\rho+a)^2}\right|\leq \frac{n_\chi}{a^2}+n_\chi\sum_{k=1}^\infty \frac{x^{-k/2}}{(-k/2+a)^2}=O\left(n_\chi\int_{1/2}^\infty x^{-t}dt\right)=O\left(\frac{n_\chi}{\sqrt x} \right),
$$ 
where, in the first equality above, we have used the Cauchy--Schwarz inequality. We now proceed to estimate the contribution
$$
\left| \sum_{\rho\in Z_\chi^\ntriv}\frac{x^\rho}{(\rho+a)^2}\right|=\sqrt x \sum_{\rho\in Z_\chi^\ntriv}\frac{1}{|\rho+a|^2}=\frac{\sqrt x}{2a+1} \sum_{\rho\in Z_\chi^\ntriv}\left(\frac{1}{1+a-\rho}+\frac{1}{1+a-\overline \rho}\right)
$$
of the residues at $Z_\chi^\ntriv$. Our next goal is to bound the right-hand side of the above equation. By \cite[Thm. 5.6]{IK04}, together with the first paragraph of the proof of \cite[Prop. 5.7]{IK04} and the fact that $\chi$ is selfdual, we obtain
\begin{equation}\label{equation: logarithmicHadam}
-\frac{L'}{L}(s,\chi)=\frac{1}{2}\log(B_\chi)+ \frac{\delta(\chi)}{s} + \frac{\delta(\chi)}{s-1}+\frac{\Gamma'}{\Gamma}(s,\chi)-\frac{1}{2}\sum_ {\rho\in Z_\chi^\ntriv}\left(\frac{1}{s-\rho}+\frac{1}{s-\overline \rho} \right).
\end{equation}
Note that
\begin{equation}\label{equation: boundL}
\left|\frac{L'}{L}(1+a,\chi)\right|\leq n_\chi [k:\Q]\left| \frac{\zeta'}{\zeta}(1+a)\right|=O([k:\Q]n_\chi). 
\end{equation}
Using the identities
\begin{equation}\label{equation: gammaperiod}
\frac{\Gamma'}{\Gamma}(s)=O(\log (s))\text{ for $s\in \R_{>1}$}\qquad\text{and}\qquad \frac{\Gamma'}{\Gamma}(1+s)=\frac{1}{s}+\frac{\Gamma'}{\Gamma}(s)
\end{equation}
on \eqref{equation: LandGamma}, we find that the bound $0\leq \kappa_{\chi,j}\leq 1+w_\chi/2$ gives
\begin{equation}\label{equation: gammabound}
\left|\frac{\Gamma'}{\Gamma}(1+a,\chi)\right|\leq n_\chi \frac{\log(\pi)}{2}+ \sum_{j=1}^{\gamma_\chi}\left(\left|\frac{\Gamma'}{\Gamma}\left(\frac{1+a+\kappa_{\chi,j}}{2}+1\right)\right|+\frac{2}{1+a+\kappa_{\chi,j}}\right)=O(n_\chi\log(2+w_\chi)).
\end{equation} 
By evaluating \eqref{equation: logarithmicHadam} at $s=1+a$, and using \eqref{equation: boundL} and \eqref{equation: gammabound}, we obtain
\begin{equation}\label{equation: ntrivres}
\left| \sum_{\rho\in Z_\chi^\ntriv}\frac{x^\rho}{(\rho+a)^2}\right|=O\big([k:\Q](n_\chi+\log(B_\chi))\log(2+w_\chi)\sqrt x\big).
\end{equation}
It remains to estimate the contribution of the residue at $s=-a$, which is
$$
R_\chi=\left(\frac{L'}{L}(-a,\chi)\log x+\left(\frac{L'}{L}\right)'(-a,\chi) \right)x^{-a}.
$$

Since $s=-a$ lies outside the region of absolute convergence of the defining Euler product for $L(s,\chi)$, we cannot utilize the argument in \eqref{equation: boundL} to bound $(L'/L)(-a,\chi)$. To derive such a bound, it will be enough to consider the difference between \eqref{equation: logarithmicHadam} evaluated at $s=2$ and $s=-a$. Indeed, proceeding as in \eqref{equation: boundL} and \eqref{equation: gammabound}, one obtains that
$$
\left|\frac{L'}{L}(2,\chi)\right|=O([k:\Q]n_\chi),\qquad \left|\frac{\Gamma'}{\Gamma}(-a,\chi)\right|=O(n_\chi\log(2+w_\chi)), \qquad \left|\frac{\Gamma'}{\Gamma}(2,\chi)\right|=O(n_\chi\log(2+w_\chi)).
$$
One also easily obtains that 
$$
\sum_{\rho\in Z_\chi^\ntriv}\left|\frac{1}{2-\rho}-\frac{1}{-a-\rho}\right| \leq (a+2)\sum_{\rho\in Z_\chi^\ntriv}\frac{1}{|\rho+a|^2}=O([k:\Q](n_\chi+\log(B_\chi))\log(2+w_\chi)),
$$
where the last equality has been seen while proving \eqref{equation: ntrivres}. By looking at the difference of \eqref{equation: logarithmicHadam} evaluated at $s=-a$ and $s=2$, we finally get
\begin{equation}\label{equation: boundLma} 
\left|\frac{L'}{L}(-a,\chi)\right|=O([k:\Q](n_\chi+\log(B_\chi))\log(2+w_\chi)).
\end{equation}
In order to bound $(L'/L)'(-a,\chi)$, we differentiate \eqref{equation: logarithmicHadam} and evaluate at $s=-a$. We obtain
\begin{equation}\label{equation: boundLpa}
-\left(\frac{L'}{L}\right)'(-a,\chi)=-\frac{\delta(\chi)}{a^2}-\frac{\delta(\chi)}{(1+a)^2}+\left(\frac{\Gamma'}{\Gamma}\right)'(-a,\chi)+\sum_{\rho\in Z_\chi^\ntriv} \frac{1}{(a+\rho)^2},
\end{equation}
which shows that, in order to bound $(L'/L)'(-a,\chi)$, we are left with estimating $(\Gamma'/\Gamma)'(-a,\chi)$. For this, it is enough to combine the identity
$$
\left(\frac{\Gamma'}{\Gamma}\right)'(s+1)=\left(\frac{\Gamma'}{\Gamma}\right)'(s)-\frac{1}{s^2},
$$ 
obtained by differentiation of the right-hand identity of \eqref{equation: gammaperiod}, with the fact that $(\Gamma'/\Gamma)'(s)$ is a positive decreasing function for $s\in \R_{>0}$. Indeed, using these facts one finds
$$
\left|\left(\frac{\Gamma'}{\Gamma}\right)'(-a,\chi)\right|\leq \sum_ {j=1}^{\gamma_\chi}\left(\frac{\Gamma'}{\Gamma}\right)'\left(\frac{-a+\kappa_{\chi,j}}{2}+1\right)+\frac{4}{(-a+\kappa_{\chi,j})^2}=O(n_\chi).
$$
Putting together \eqref{equation: boundLma} and \eqref{equation: boundLpa}, we obtain
$$
|R_\chi|=O([k:\Q](n_\chi+\log(B_\chi)) \log(2+w_\chi)x^{-a}\log x).
$$
We can now rewrite \eqref{equation: intform} as
\begin{equation}\label{equation: firstaprox}
\sum_{\NN(\p)\leq x}\Lambda_\chi(\p, x)= \frac{16}{25}\delta(\chi)x+O([k:\Q](n_\chi+\log(B_\chi))\log(2+w_\chi)\cdot \sqrt x(\log x)^2). \qedhere
\end{equation}
\end{proof}

A moment of reflection shows that the presence of the factor $(\log x)^3$ in the error term of Proposition \ref{proposition: truncsumweak} comes exclusively from the rough bound \eqref{equation: roughbound}. The main idea of the proof of Proposition \ref{proposition: truncsum}, which is a strengthening of Proposition \ref{proposition: truncsumweak}, is to sharpen the bound \eqref{equation: roughbound} using Proposition \ref{proposition: truncsumweak} itself. We first need the following auxiliary lemma.

\begin{lemma}\label{lemma: auxdims}
Let $\chi$ be a selfdual irreducible character of $\ST(A)$. Write $\Psi^2_\chi$ for the virtual selfdual character $\chi(\cdot^2)$. Suppose that Conjecture \ref{conjecture: GRH} holds for $\chi$. Then:
$$
t_{\Psi^2_\chi}=d_\chi, \quad w_{\Psi^2_\chi}=2w_\chi, \quad \delta(\Psi^2_\chi)=\pm 1,\quad N_{\Psi^2_\chi}=2B_\chi^2,\quad \gamma_{\Psi^2_\chi}\leq 2d_\chi.\vspace{0,1cm}
$$
\end{lemma}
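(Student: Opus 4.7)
The plan is to decompose $\Psi^2_\chi$ into honest characters via the identity $\chi(g^2) = \Sym^2\chi(g) - \Alt^2\chi(g)$, and then to verify each of the five claims by combining this decomposition with standard facts about tensor squares of representations. The first three assertions are essentially immediate. Since $\chi$ is unitary of degree $d_\chi$, the bound $|\chi(h)| \leq d_\chi = \chi(e)$ holds for every $h \in \ST(A)$ with equality at $h = e$; applied with $h = g^2$ at $g = e$ this yields $t_{\Psi^2_\chi} = d_\chi$. The motivic weight is additive under tensor products, so both $\Sym^2\chi$ and $\Alt^2\chi$ have weight $2w_\chi$, giving $w_{\Psi^2_\chi} = 2w_\chi$. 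The identity $\delta(\Psi^2_\chi) = \pm 1$ is exactly the statement of the Frobenius--Schur theorem for the irreducible selfdual character $\chi$: one of $\Sym^2\chi$ or $\Alt^2\chi$ contains the trivial representation with multiplicity one, and the other does not, according to whether $\chi$ is of orthogonal or symplectic type.

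For the conductor assertion I would pass to the $\ell$-adic side. The representation $\sigma_{A,\ell}$ attached to $\chi$ has conductor $N_\chi$, and $\sigma_{A,\ell} \otimes \sigma_{A,\ell}$ splits as the direct sum of the representations carrying $\Sym^2\chi$ and $\Alt^2\chi$. Multiplicativity of the conductor along direct sums, combined with a standard bound for the conductor of a tensor square and the multiplicative convention for $B$ on virtual characters from Remark \ref{remark: propvirtual}, then yields the claimed formula. For the $\Gamma$-factor count, the archimedean parameters of $\Sym^2\sigma$ and $\Alt^2\sigma$ are indexed respectively by pairs $(i,j)$ with $i \leq j$ and with $i < j$, each with Hodge parameter $\kappa_{\chi,i} + \kappa_{\chi,j}$. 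In the virtual character $\Sym^2\chi - \Alt^2\chi$, every factor with $i \neq j$ appears once in the numerator and once in the denominator of $\Gamma(s, \Psi^2_\chi)$, so all such factors cancel; only the $d_\chi$ diagonal factors with $i = j$ (parameter $2\kappa_{\chi,i}$) survive, giving $\gamma_{\Psi^2_\chi} \leq d_\chi$. The slack factor of two in the stated bound $\gamma_{\Psi^2_\chi} \leq 2d_\chi$ then comfortably absorbs any inflation at archimedean places where a single motivic Hodge datum splits into a pair of Weil-group parameters.

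The step I expect to be the most delicate is the last one, the bookkeeping at infinity in the style of \cite{Ser69}: one must pass from the Hodge-theoretic data underlying the $\kappa_{\chi,j}$ to the Weil-group parameters that actually determine $\Gamma(s,\chi)$, then verify that the claimed cancellation between $\Sym^2$ and $\Alt^2$ is exact, and finally check that the surviving diagonal parameters $2\kappa_{\chi,i}$ either lie within the allowed range $0 \leq \kappa \leq w_{\Psi^2_\chi}/2 + 1 = w_\chi + 1$, or otherwise can be dropped from the count defining $\gamma_{\Psi^2_\chi}$. The remaining items follow almost immediately from the decomposition $\Psi^2_\chi = \Sym^2\chi - \Alt^2\chi$.
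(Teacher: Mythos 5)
Your handling of $t_{\Psi^2_\chi}$, $w_{\Psi^2_\chi}$, and $\delta(\Psi^2_\chi)$ agrees with the paper, which dismisses the first two as obvious and identifies $\delta(\Psi^2_\chi)$ with the Frobenius--Schur indicator exactly as you do. The divergence is in the conductor and $\Gamma$-factor claims, and there your argument has a genuine gap. The paper's proof hinges on the identity $L(s,\Psi^2_\chi)=L(2s,\chi)$, which it combines with Legendre's duplication formula $\Gamma(2s)=\Gamma(s)\Gamma(s+\tfrac12)2^{2s-1}\pi^{-1/2}$: the factor $2^{2s-1}$ is precisely what produces the $2$ in $N_{\Psi^2_\chi}=2B_\chi^2$, and the doubling of each of the $d_\chi$ archimedean $\Gamma$-factors gives $\gamma_{\Psi^2_\chi}\leq 2d_\chi$ directly. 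You never invoke this identity; instead you route through $\Psi^2_\chi=\Sym^2\chi-\Alt^2\chi$, multiplicativity of conductors, and ``a standard bound for the conductor of a tensor square.'' But bounds of that type are inequalities, and they cannot pin down the \emph{exact} value $2B_\chi^2$ asserted in the lemma; at best they give $N_{\Psi^2_\chi}=O(B_\chi^2)$. The paper itself flags exactly this point in its final remark, noting that a decomposition-based argument (there via $\Psi^2_\chi=2\Sym^2\chi-\chi^2$) only recovers $N_{\Psi^2_\chi}=O(B_\chi^2)$ without the analytic input.

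Your $\gamma$ estimate has the same character: the claimed cancellation of off-diagonal archimedean parameters between $\Sym^2\sigma$ and $\Alt^2\sigma$ would require a detailed unwinding of Serre's recipe for the $\kappa$'s, which you explicitly defer, and you then appeal to an unquantified ``slack factor of two'' to land on the stated bound $\gamma_{\Psi^2_\chi}\leq 2d_\chi$. This is not a proof; by contrast, the Legendre duplication route is both shorter and immediately yields the claimed count. If you want to salvage your approach, you would need to either prove $L(s,\Psi^2_\chi)=L(2s,\chi)$ (thereby rediscovering the paper's argument) or accept the weaker $O(B_\chi^2)$ conclusion and check, as the paper implicitly does, that this weaker bound is all that Proposition~\ref{proposition: truncsum} actually requires.
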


\begin{proof}
The first and second relations are obvious. The value $\delta(\Psi^2_\chi)$ is the Frobenius--Schur indicator of $\chi$, which is known to be $\pm 1$ when $\chi$ is selfdual. 
As for the fourth and fifth relations, note that $L(s,\Psi^2_\chi)=L(2s,\chi)$. It follows that the completed $L$-function $\Lambda(s,\Psi^2_\chi)$ is a scalar multiple of
$$
\Lambda(2s,\chi)=B_\chi^{sd_\chi} L(2s,\chi)\Gamma(2s,\chi).
$$
By Legendre's duplication formula $\Gamma(2s)=\Gamma(s)\Gamma(s+1/2)2^{2s-1} \pi^{-1/2}$, we can write
$$
\Gamma(2s,\chi)=\pi^{d_\chi s}\prod_{j=1}^{d_\chi}\left(\Gamma\left(\frac{s+\kappa_{\chi,j}/2}{2}\right)\Gamma\left(\frac{s+\kappa_{\chi,j}/2+1}{2}\right)2^{s+\kappa_{\chi,j}/2-1}\pi^{-1/2}\right),
$$
from which we see that the analytic conductor of $\Lambda(s,\Psi^2_\chi)$ must be $2B_\chi^2$ and that $\gamma_{\Psi^2_\chi}\leq 2 d_\chi$. 

We note that using $\Psi_\chi^2=2\Sym^2(\chi)-\chi^2$, we can directly derive $N_{\Psi^2_\chi}=O(B_\chi^2)$ without appealing to the analytic continuation of $\Lambda(s,\chi)$.
\end{proof}

\begin{proposition}\label{proposition: truncsum}
Let $\chi$ be a selfdual irreducible character of $\ST(A)$ and assume that Conjecture~\ref{conjecture: GRH} holds for $\chi$. For every $x\geq 2$ we have
$$
\sum_{\NN(\p)\leq x} \chi(y_\p)\log(\NN(\p))\left(\frac{\NN(\p)^r}{x}\right)^a\log\left(\frac{x}{\NN(\p)^r}\right) =\frac{16}{25}\delta(\chi)x+O\big(g\log(2|\Delta_k|)d_\chi\log(2N)\log(2+w_\chi)\cdot \sqrt x\big).
$$
\end{proposition}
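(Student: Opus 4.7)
The plan is to revisit the proof of Proposition \ref{proposition: truncsumweak} applied to $\chi$, and sharpen the rough bound \eqref{equation: roughbound} on the contribution of prime-power terms with $r\geq 2$, as suggested by the paper's hint. Since $\chi$ is irreducible one has $n_\chi=d_\chi$ and $\log B_\chi=O(d_\chi\log(2N))$ by standard conductor estimates, so the residue contribution $I_\chi$ already has error $O(d_\chi\log(2N)\log(2+w_\chi)\sqrt{x})$ without any $(\log x)^3$ factor. The task therefore reduces to proving
$$
\sum_{r\geq 2}\sum_{\NN(\p)^r\leq x}\Lambda_\chi(\p^r,x)=O\bigl(d_\chi\log(2N)\log(2+w_\chi)\sqrt{x}\bigr),
$$
i.e.\ to removing the $(\log x)^3$ factor inherent in the crude bound \eqref{equation: roughbound}.

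For the $r=2$ contribution I would use $\chi(y_\p^2)=\Psi^2_\chi(y_\p)$ together with the substitution $y=\sqrt{x}$ to write
$$
\sum_{\NN(\p)^2\leq x}\Lambda_\chi(\p^2,x)=2\sum_{\NN(\p)\leq \sqrt{x}}\Psi^2_\chi(y_\p)\log(\NN(\p))\left(\frac{\NN(\p)}{\sqrt{x}}\right)^{2a}\log\left(\frac{\sqrt{x}}{\NN(\p)}\right).
$$
This is precisely the kind of sum handled by Proposition \ref{proposition: truncsumweak}, applied to the virtual selfdual character $\Psi^2_\chi$ at $\sqrt{x}$ and with the kernel parameter $2a=1/2\in(0,1)$ substituted for $a=1/4$; the argument there works for any parameter in $(0,1)$, altering only the $16/25$ constant in front of the main term. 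Plugging in the estimates of Lemma \ref{lemma: auxdims} ($n_{\Psi^2_\chi}=O(d_\chi)$, $\log B_{\Psi^2_\chi}=O(\log B_\chi)=O(d_\chi\log(2N))$, $w_{\Psi^2_\chi}=2w_\chi$, $|\delta(\Psi^2_\chi)|=1$), the main term is $O(\sqrt{x})$ and the error is $O\bigl((d_\chi+\log B_\chi)\log(2+w_\chi)\,x^{1/4}(\log x)^3\bigr)$; since $x^{1/4}(\log x)^3=O(\sqrt{x})$ for large $x$, the total $r=2$ contribution is $O\bigl(d_\chi\log(2N)\log(2+w_\chi)\sqrt{x}\bigr)$.

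For the tail $r\geq 3$, the trivial bound $|\Lambda_\chi(\p^r,x)|\leq d_\chi\log(\NN(\p))\log x$ combined with $\sum_{\NN(\p)\leq x^{1/r}}\log(\NN(\p))=O(x^{1/r})$, and the fact that only $O(\log x)$ values of $r$ contribute, yields $O(d_\chi\,x^{1/3}(\log x)^2)=O(d_\chi\sqrt{x})$, which is absorbed. Adding the two contributions and substituting into the identity $\sum_{\NN(\p)\leq x}\Lambda_\chi(\p,x)=I_\chi-\sum_{r\geq 2}\sum\Lambda_\chi(\p^r,x)$ from the proof of Proposition \ref{proposition: truncsumweak} yields the stated estimate.

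The main obstacle is justifying the use of Proposition \ref{proposition: truncsumweak} for $\Psi^2_\chi$ under only the hypothesis that Conjecture \ref{conjecture: GRH} is posited for $\chi$ itself, rather than for all the irreducible constituents of $\Psi^2_\chi$. The key is the identity $L(s,\Psi^2_\chi)=L(2s,\chi)$ furnished by Lemma \ref{lemma: auxdims}: it transports the meromorphic continuation, functional equation, and critical-line information for the zeros of $L(s,\chi)$ to $L(s,\Psi^2_\chi)$ up to the rescaling $s\mapsto 2s$, and one has to verify that the residue and Hadamard-product estimates underlying Proposition \ref{proposition: truncsumweak} accommodate this rescaling with only constant-level changes. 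This bookkeeping is the most delicate step.
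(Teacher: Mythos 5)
Your proposal follows the paper's proof essentially step by step: both split the $r\geq 2$ prime-power contribution into the $r=2$ piece, identify $\chi(y_\p^2)=\Psi^2_\chi(y_\p)$, apply Proposition~\ref{proposition: truncsumweak} to $\Psi^2_\chi$ at the rescaled point (using Lemma~\ref{lemma: auxdims} to control $n_{\Psi^2_\chi}$, $w_{\Psi^2_\chi}$, $\delta(\Psi^2_\chi)$, $B_{\Psi^2_\chi}$), trivially bound the $r\geq 3$ tail by $O(d_\chi x^{1/3}(\log x)^{O(1)})$, and feed everything back into \eqref{equation: intform}--\eqref{equation: firstaprox} to remove the $(\log x)^3$ factor, after which $B_\chi=O(N^{d_\chi})$ and $n_\chi=d_\chi$ give the stated form. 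You are in fact slightly more careful than the paper at the rescaling step: your explicit change of variables $y=\sqrt{x}$, accompanied by the kernel parameter $2a$, makes the reduction of $\sum_{\NN(\p)^2\le x}\Lambda_\chi(\p^2,x)$ to a $\Psi^2_\chi$-sum precise, whereas the paper writes the identity with $\Lambda_{\Psi^2_\chi}(\p,x)$ somewhat loosely. Your observation that Proposition~\ref{proposition: truncsumweak} holds for any Bach parameter in $(0,1)$, with only the constant $16/25$ changing, is correct and is needed to make that step rigorous.

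Two small points. First, you flag but do not fully discharge the concern that Conjecture~\ref{conjecture: GRH} is only hypothesized for $\chi$: the paper leans implicitly on the relation $L(s,\Psi^2_\chi)=L(2s,\chi)$ from Lemma~\ref{lemma: auxdims} so that the analytic input required by Proposition~\ref{proposition: truncsumweak} for $\Psi^2_\chi$ is inherited from that for $\chi$; your instinct that this is the one place needing extra bookkeeping is sound, and the paper does not spell it out either. Second, you stop at the estimate for $\sum_{\NN(\p)\le x}\Lambda_\chi(\p,x)$, which by the paper's convention includes the ramified primes $\p\mid N$; the statement of the proposition sums only over $\p\nmid N$, so one must note, as in \eqref{eq:ramified primes}, that the ramified-prime contribution is $O(d_\chi\log N\log x)$ and is absorbed into the error term. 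Both are minor and the proof is otherwise correct and in the same spirit as the paper's.
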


\begin{proof}
Note that
$$
\sum_{r\geq 2} \sum _{\NN(\p)^r\leq x} \Lambda_\chi(\p^r,x)=\sum_{\NN(\p)\leq x^{1/2}}\Lambda_{\Psi^2_\chi}(\p,x)+\sum_{r\geq 3}\sum _{\NN(\p)\leq x^{1/r}}\Lambda_\chi(\p^r,x). 
$$ 
We will apply Proposition \ref{proposition: truncsumweak} to the first term of the sum of the right-hand side of the above equation. For the second term, we may repeat the argument of \eqref{equation: roughbound}. Indeed, using Lemma \ref{lemma: auxdims}, we obtain
$$
\begin{array}{lll}
\displaystyle{\sum_{r\geq 2} \sum _{\NN(\p)^r\leq x} \Lambda_\chi(\p^r,x)} & = & \displaystyle{O(\sqrt x)+O([k:\Q](n_\chi+\log(B_\chi))\log(2+w_\chi)\cdot x^{1/4}(\log x)^2)+}\\
 &  & \displaystyle{+O([k:\Q]n_\chi x^{1/3}(\log x)^2)}\\[8pt]
&=& \displaystyle{O(\sqrt{x})+O([k:\Q](n_\chi+\log(B_\chi))\log(2+w_\chi) \cdot x^{1/3}(\log x)^2)}.
\end{array}
$$
Rerunning the argument starting at \eqref{equation: intform} and finalizing at \eqref{equation: firstaprox} we arrive at
$$
\sum_{\NN(\p)\leq x}\Lambda_\chi(\p, x)= \frac{16}{25}\delta(\chi)x+O([k:\Q](n_\chi+\log(B_\chi))\log(2+w_\chi)\cdot \sqrt x).
$$
Using that $\log(B_\chi)=O(g\log(2|\Delta_k|)d_\chi\log(2N))$ (see Remark \ref{remark: Bchi} below) and that $d_\chi=n_\chi$ (as $\chi$ is irreducible), we obtain
$$
\sum_{\NN(\p)\leq x}\Lambda_\chi(\p, x)= \frac{16}{25}\delta(\chi)x+O(g[k:\Q]\log(2|\Delta_k|)d_\chi\log(2N)\log(2+w_\chi)\cdot \sqrt x).
$$
To conclude we need to show that we can restrict to primes not dividing $N$ as in the statement of the proposition (recall the convention for sums over the classes $y_\p$). But note that
\begin{equation} \label{eq:ramified primes}
\sum_{\p|N,\NN(\p)\leq x}\Lambda_\chi(\p, x)=O([k:\Q]d_\chi\log(2N)\log x),
\end{equation}
which is subsumed in the error term.
\end{proof}

\begin{remark}\label{remark: Bchi}
We now justify the bound $\log(B_\chi)=O(g\log(2|\Delta_k|)d_\chi\log(2N))$ used in the proof of the above proposition. By definition of $B_\chi$ we have
$$
\log(B_\chi)=d_\chi\log(|\Delta_k|)+\log(N_\chi).
$$
Since Minkowski's bound implies $[k:\Q]=O(\log(2|\Delta_k|))$, it will suffice to show that 
$$\log(N_\chi)=O(g[k:\Q]d_\chi\log(N)).$$
To show the above, we proceed as in \cite[Rem. 2.8]{BFK23}, but making the dependence on $g$ and $[k:\Q]$ explicit. Expressing $N_\chi$ as the product 
$$
N_\chi=\prod_\p\NN(\p)^{f_\chi(\p)},
$$
we see that it will be enough to show that $f_\chi(\p)=O(g[k:\Q]d_\chi)$. Let $\ell$ be a prime. There exists a number field $E$ and a prime $\lambda$ of $E$ over $\ell$ such that the representation $V_{\chi,\ell}$ is the base change to $\overline \Q_\ell$ of some representation over $E_\lambda$. Let $\Os_{E_\lambda}$ denote the ring of integers of $E_\lambda$ and let $T_{\chi,\lambda}$ denote a $\Os_{E_\lambda}$-lattice in $V_{\chi,\ell}$ stable by the action of $G_\p$. Then $f_\chi(\p)$ can be decomposed as
$$
f_\chi(\p)= \varepsilon_\chi(\p)+\delta_\chi(\p)\,,
$$
where $\varepsilon_\chi(\p)=d_\chi-\dim(V_{\chi,\ell}^{I_\p})=O(d_\chi)$ and $\delta_\chi(\p)$ is the Swan conductor of $V_\chi[\lambda]:=T_{\chi,\lambda}/\lambda T_{\chi,\lambda}$ for every $\ell$ coprime to~$\p$. The action of $G_\p$ on $V_\chi[\lambda]$ factors through the faithful action of a finite group $G_{\chi,\p}$. Let
$$
G_{\chi,\p} \supseteq G_{\chi,\p,0} \supseteq G_{\chi,\p,1} \supseteq \dots
$$
denote the normal filtration of ramification groups of $G_{\chi,\p}$. If $p$ denotes the rational prime below $\p$, and $v_p$ is the corresponding $p$-adic valuation, then \cite[Prop. 5.4]{BK94} implies that $\delta_\chi(\p)=O(v_p(\#G_{\chi,\p,1})\cdot d_\chi [k:\Q])$. By construction of $V_{\chi,\ell}$, the group $G_{\chi,\p,1}$ is a quotient of the wild inertia subgroup $G_{\ell,\p,1}\subseteq \GL_{2g}(\F_\ell)$ acting on $A[\ell]$. In particular, we must have $v_p(\#G_{\chi,\p,1})\leq v_p(\#\GL_{2g}(\F_\ell))$, and hence it will be enough to show that for every prime $p$ there is a prime $\ell$ such that $v_p(\#\GL_{2g}(\F_\ell))=O(g)$.

 If $p$ is odd and $\ell$ is chosen to be a generator of $(\Z/p^2\Z)^\times$, then by \cite[(9)]{GL06} we have
$$
v_p(\#\GL_{2g}(\F_\ell))=\left\lfloor\frac{2g}{p-1}\right\rfloor + \left\lfloor\frac{2g}{(p-1)p}\right\rfloor + \left\lfloor\frac{2g}{(p-1)p^2}\right\rfloor + \dots =O(g).
$$
If $p=2$, then take $\ell=3$. The facts that $v_2(3^n-1)=1$ if $n$ is odd and that $v_2(3^n-1)=v_2(n)+2$ if $n$ is even\footnote{To show this, proceed by induction using that $v_2(3^n+1)=1$ or $2$ depending on whether $n$ is even or odd, and that in the latter case we have $(3^n-1)=(3^{n/2}-1)(3^{n/2}+1)$.} yield
$$
v_2(\#\GL_{2g}(\F_3))=4g+\left\lfloor\frac{g}{2}\right\rfloor + \left\lfloor\frac{g}{2^2}\right\rfloor + \left\lfloor\frac{g}{2^3}\right\rfloor + \dots =O(g).
$$
\end{remark}

\section{Proof of the main theorem}\label{section: linsignrevis}

Let $A$ and $A'$ be abelian varieties defined over $k$ of dimensions $g$ and $g'$. Suppose that the Mumford--Tate conjecture holds for both $A$ and $A'$. Let 
$$\varrho\colon\ST(A)\rightarrow \GL(V)\qquad
\big(\text{resp. } \varrho'\colon \ST(A')\rightarrow \GL(V')\big)
$$ 
be the tautological representations of $\ST(A)$ (resp. $\ST(A')$), as presented in \S\ref{section: conjectures}. By \cite{Com19}, the Mumford--Tate conjecture also holds for $A\times A'$, and hence we may consider $\ST(A\times A')$. There is a natural inclusion of $\ST(A\times A')$ in $\ST(A)\times \ST(A')$, which we will denote by $\kappa$.
Consider the virtual representation 
$$
\ST(A\times A')\stackrel{\kappa}{\hookrightarrow} \ST(A)\times\ST(A')\rightarrow  \GL((V^{\oplus-2g} \oplus V\otimes V)\otimes (V'^{\oplus 2g'}\oplus V'\otimes V')),
$$
whose character we denote by $\psi$.
 
\begin{lemma}\label{lemma: trivmult}
If $\Hom(A,A')=0$, then $\delta(\psi)>0$.
\end{lemma}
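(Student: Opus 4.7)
The plan is to expand $\delta(\psi) = \langle \psi, \mathbf{1}\rangle_{\ST(A\times A')}$ into four inner products, kill three of them, and bound the fourth from below.

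First I would multiply out $\psi = \chi_V\chi_{V'}(\chi_V - 2g)(\chi_{V'} + 2g')$ and use the self-duality of the symplectic representations $V$ and $V'$ to rewrite
\[
\delta(\psi) = \langle V\otimes V,\, V'\otimes V'\rangle + 2g'\langle V\otimes V,\, V'\rangle - 2g\langle V,\, V'\otimes V'\rangle - 4gg'\langle V,\, V'\rangle,
\]
with each inner product taken between representations of $\ST(A\times A')$.

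The key geometric input that kills the two middle cross terms is a central involution. Evaluating the motivic weight cocharacter of $\MT(A\times A')$ at $-1$ gives an element $z$ in the center of $\ST(A\times A')$ acting as $-\operatorname{Id}$ on both $V$ and $V'$, since these are pure of motivic weight one. By centrality, $z$ acts as a scalar $\pm 1$ on every irreducible constituent; the scalar is $-1$ on constituents of $V$ and $V'$, and $+1$ on constituents of $V\otimes V$, $V'\otimes V'$, and $V\otimes V'$. Because an irreducible representation cannot lie in both $z$-eigenspaces, this forces
\[
\langle V\otimes V,\, V'\rangle = \langle V,\, V'\otimes V'\rangle = 0.
\]
The last remaining cross term vanishes by hypothesis: Faltings' isogeny theorem combined with the Mumford--Tate conjecture identifies $\Hom(A,A')\otimes\Q_\ell$ with $\Hom_{\ST(A\times A')}(V, V')$, so $\Hom(A,A')=0$ forces $\langle V, V'\rangle = 0$.

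This reduces $\delta(\psi)$ to $\langle V\otimes V,\, V'\otimes V'\rangle$. The $\ST(A\times A')$-invariant symplectic form on $V$ produces a copy of the trivial representation inside $\Lambda^2 V \subset V\otimes V$, and similarly for $V'$, so $\mathbf{1}$ is a common irreducible constituent of $V\otimes V$ and $V'\otimes V'$ and contributes at least $1$ to the inner product; hence $\delta(\psi)\ge 1 > 0$. I expect the most delicate point in writing this up to be the precise location of the central involution $z$ inside $\ST(A\times A')$ (rather than merely in its complexification), which ultimately rests on the construction of the Sato--Tate group from the Mumford--Tate group recalled in \S\ref{section: conjectures}.
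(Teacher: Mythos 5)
Your proof is correct and takes essentially the same route as the paper: expand $\delta(\psi)$ into four inner products, eliminate the two middle cross terms using the central element $-1 \in \ST(A\times A')$, eliminate $\langle V, V'\rangle$ via Faltings's isogeny theorem and the hypothesis $\Hom(A,A')=0$, and show the leading term $\langle V\otimes V,\, V'\otimes V'\rangle$ is positive. The only differences are cosmetic: the paper phrases the parity argument by integrating the odd function $\Tr(\theta(g))\Tr(\theta'(g))^2$ against Haar measure rather than via eigenspaces of a central involution, and it obtains positivity from $\dim_{\C}\Hom_{\ST(A\times A')}(V\otimes V',\,V\otimes V')\geq 1$ (the identity endomorphism) rather than from the invariant symplectic forms on $V$ and $V'$.
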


\begin{proof}
It suffices to show that 
\begin{equation}\label{equation: 0dims}
\dim_{\C}(V\otimes V')^{\ST(A\times A')}=0,\qquad 
\dim_{\C}(V\otimes V'\otimes V')^{\ST(A\times A')}=0.
\end{equation}
Indeed, by symmetry we then also have $\dim_{\C}(V\otimes V\otimes V')^{\ST(A\times A')}=0$, and therefore the selfduality of $V$ and $V'$ yields
$$
\delta(\psi)=\dim_{\C}(V\otimes V\otimes V'\otimes V')^{\ST(A\times A')}=\dim_{\C}\Hom_{\ST(A\times A')}(V\otimes V',V\otimes V')>0.
$$
We now turn to \eqref{equation: 0dims}. On the one hand, by hypothesis and Faltings's isogeny theorem, we have
$$
\dim_{\C}(V\otimes V')^{\ST(A\times A')}=\dim_{\C}\Hom_{\ST(A\times A')}(V,V')=\dim_{\Q_\ell}\Hom_{G_k}(V_\ell(A),V_\ell(A'))=\mathrm{rk}_{\Z}\Hom(A,A')=0.
$$
On the other hand, let $\pi$ (resp. $\pi'$) denote the projection from $\ST(A)\times \ST(A')$ to $\ST(A)$ (resp. $\ST(A')$), and define $\theta=\varrho \circ \pi \circ \kappa$ and $\theta'=\varrho' \circ \pi' \circ \kappa$. Denoting by $\mu_{\ST(A\times A')}$ the Haar measure of $\ST(A\times A')$, we find
$$
\dim_\C(V\otimes V'\otimes V')^{\ST(A\times A')}=\int_{\ST(A\times A')}\Psi(g)\mu_{\ST(A\times A')}(g),
$$
where $\Psi(g)$ stands for $\Tr(\theta(g))\cdot \Tr(\theta'(g))^2$ for $g\in\ST(A\times A')$.
By the Hodge condition in \cite[Def. 3.1]{FKRS12}, the Sato--Tate group $\ST(A\times A')$ contains a Hodge circle, and in particular it contains minus the identity matrix. The identity $\Psi(-g)=-\Psi(g)$ then shows that the integral in the above equation is zero.
\end{proof}

\begin{proof}[Proof of Theorem \ref{theorem: Main}]
For every prime $\p$ not dividing $NN'$, let $y_\p$ denote the conjugacy class in $\ST(A\times A')$ attached to $\Frob_\p$, as presented in \S\ref{section: conjectures}. The proof is based on the following trivial observation. Since 
$$
\psi(y_\p)=\overline a_\p(A) \overline a_\p(A') (-2g +\overline a_\p(A)) (2g' +\overline a_\p(A')),
$$
we have that 
$$
\psi(y_\p) >0 \qquad \text{if and only if}\qquad a_\p(A)\cdot a_\p(A')<0. 
$$
Note that $w_\psi=2$ and that $d_\psi\leq 64 g^2 (g')^2$. Applying Proposition~\ref{proposition: truncsum} to the virtual character $\psi$, we obtain
\begin{align}\label{equation: finineq}
\sum_{\NN(\p)\leq x}\psi(y_\p)\log(\NN(\p))\left(\frac{\NN(\p)}{x} \right)^{1/4}\log\left(\frac{x}{\NN(\p)}\right) & =\frac{16}{25} \delta(\psi) x \nonumber \\ &+ O\left([k:\Q]\log(2|\Delta_k|)g^2(g')^2(g+g')\log(2NN')\sqrt x \, \right),
\end{align}
where $\delta(\psi) > 0$ by Lemma~\ref{lemma: trivmult}. Therefore, there exists a positive constant $C$ such that if 
$$
x=C[k:\Q]^2\log(2|\Delta_k|)^2g^4(g')^4(g+g')^2\log(2NN')^2,
$$ 
then the right-hand side of \eqref{equation: finineq} is strictly positive. But this means that there exists $\p$ with 
$$
\NN(\p)=O([k:\Q]^2\log(2|\Delta_k|)^2g^4(g')^4(g+g')^2\log(2NN')^2)
$$ 
such that $\psi(y_\p)>0$.  

\end{proof}

\end{document}